\newcommand{\fft}{\mathcal{F}}
\newcommand{\paren}[1]{\left(#1\right)}
\newcommand{\parennn}[1]{\left\{#1\right\}}
\newcommand{\norm}[1]{\left\|#1\right\|}
\renewcommand{\Bbb}{\mathbb{B}}
\newcommand{\Cbb}{\mathbb{C}}
\newcommand{\Nbb}{\mathbb{N}}
\newcommand{\Rbb}{\mathbb{R}}
\newcommand{\set}[2]{\left\{#1\,\left|\,#2\right.\right\}}
\newcommand{\mmap}[3]{#1:\,#2\rightrightarrows #3\,}
\newcommand{\card}{\ensuremath{\operatorname{card}}}
\DeclareMathOperator{\Id}{Id}
\DeclareMathOperator{\dist}{dist}
\newtheorem{alg}[theorem]{Algorithm}
\begin{document}

\title{Phase retrieval with sparse phase constraint\thanks{The research leading to these results has received funding from the European Research Council under the European Union's Seventh Framework Programme (FP7/2007-2013)/ERC grant agreement No. 339681.
DRL was supported in part by German Israeli Foundation Grant G-1253-304.6 and Deutsche Forschungsgemeinschaft Collaborative Research Center SFB755.
}
}
%\subtitle{Do you have a subtitle?\\ If so, write it here}

%\titlerunning{Short form of title}        % if too long for running head

\author{Nguyen Hieu Thao         \and
        David Russell Luke
        \and \quad \quad Oleg Soloviev
        \and Michel Verhaegen
}

%\authorrunning{Short form of author list} % if too long for running head

\institute{N. H. Thao \at
              Delft Center for Systems and Control,
 			  Delft University of Technology,
 			  2628CD Delft, The Netherlands.
              Department of Mathematics,
              School of Education, Can Tho University,
              Can Tho, Vietnam. \\
              Tel.: +31-6-84986977\\
%              Fax: +123-45-678910\\
              \email{h.t.nguyen-3@tudelft.nl, nhthao@ctu.edu.vn}           %  \\
%             \emph{Present address:} of F. Author  %  if needed
           \and
           D. R. Luke  \at
              Institut f\"ur Numerische und Angewandte Mathematik, Universit\"at G\"ottingen, 37083 G\"ottingen, Germany.
              \email{r.luke@math.uni-goettingen.de}
           \and
           O. Soloviev \at
           Delft Center for Systems and Control,
 			  Delft University of Technology,
 			  2628CD Delft, The Netherlands.
              ITMO University, Kronverksky 49, 197101 St Petersburg, Russia.
              Flexible Optical B.V., Polakweg 10-11, 2288 GG Rijswijk, The Netherlands.
              Oleg Soloviev is one of the two main contributors.
             \email{o.a.soloviev@tudelft.nl}
             \and
           M. Verhaegen \at
           Delft Center for Systems and Control,
 			  Delft University of Technology,
 			  2628CD Delft, The Netherlands.
             \email{m.verhaegen@tudelft.nl}
}

\date{Received: date / Accepted: date}
% The correct dates will be entered by the editor

\maketitle

\begin{abstract}
For the first time, this paper investigates the phase retrieval problem with the assumption that the phase (of the complex signal) is sparse in contrast to the sparsity assumption on the signal itself as considered in the literature of sparse signal processing.
The intended application of this new problem model, which will be conducted in a follow-up paper, is to practical phase retrieval problems where the aberration phase is sparse with respect to the orthogonal basis of Zernike polynomials.
Such a problem is called sparse phase retrieval (SPR) problem in this paper.
When the amplitude modulation at the exit pupil is uniform, a new scheme of sparsity regularization on phase is proposed to capture the sparsity property of the SPR problem.
Based on this regularization scheme, we design and analyze an efficient solution method, named SROP algorithm, for solving SPR given only a single intensity point-spread-function image.
The algorithm is a combination of the Gerchberg-Saxton algorithm with the newly proposed sparsity regularization on the phase.
The latter regularization step is mathematically a rotation but with direction varying in iterations.
Surprisingly, this rotation is shown to be a metric projection on an auxiliary set which is independent of iterations.
As a consequence, SROP algorithm is proved to be the cyclic projections algorithm for solving a feasibility problem involving three auxiliary sets.
Analyzing regularity properties of the latter auxiliary sets, we obtain convergence results for SROP algorithm based on recent convergence theory for the cyclic projections algorithm.
Numerical results show clear effectiveness of the new regularization scheme for solving the SPR problem.
\keywords{Sparse phase retrieval \and Phase retrieval algorithm \and Cyclic projections algorithm \and Metric subregularity \and Pointwise almost averaged operator \and Linear convergence}
\subclass{65K15 \and 65Z05 \and 65T50 \and 78A45 \and 78A46 \and 90C26}
\end{abstract}

\section{Introduction}\label{s:intro}
\emph{Phase retrieval} is an inverse problem of recovering a complex signal from one or several measured intensity patterns.
It appears in many scientific and engineering fields, including astronomy, crystallography,  microscopy, optical manufacturing, and adaptive optics \cite{Fie82,Fie13,LukBurLyo02,SheEldCohChaMiaSeg15}.
An important application of phase retrieval is to quantify the properties of an imaging system via its generalized pupil function (GPF).
The fundamental advantage of this approach compared to those using intensity point spread functions (PSFs) or intensity optical transfer functions is that it is modifiable and automatically includes specific characterizations of the imaging system under investigation.
The generic formulation of the phase retrieval problem is \cite{SheEldCohChaMiaSeg15}:
\begin{equation}\label{generic PR}
\mbox{find}\quad x \in \mathbb{C}^{N}\quad \mbox{such that}\quad
|Mx|^2 = b + \varepsilon,
\end{equation}
where $b$ is the measurement data, $M$ is a known complex matrix of compatible size and $\varepsilon$ is unknown measurement noise.

Depending on the parameterization, phase retrieval problem can be formulated in either the zonal form or the modal form \cite{DoeNguVer18}.
In the zonal form approach, the aberration phase (equivalently, the GPF) is simply expressed via the canonical (pixel-wise) basis.
One of the main advantages of this formulation is that the matrix $M$ possesses desirable properties such as unitary or isometry which is helpful for understanding numerical behavior of solution methods \cite{BauComLuk02,LevSta84,LukBurLyo02,Luk05,LukSabTeb18}.
In the modal form approach, one can either parameterize the phase aberration only \cite{Southwell1977}, or represent the GPF as a linear combination of some basis functions, e.g., the extended Nijboer-Zernike basis functions \cite{Braat:05} or the radial basis functions, see for example  \cite{DoeNguVer18,PisGupSolVer18}.
In the subsequent discussion, by the modal form we restrict ourselves to the parameterization of the aberration phase (not the GPF) in terms of Zernike polynomials.
The advantage of this approach is that the dimension of the parameter variable $x$ does not depend on the size of the problem (the size of the sample grid) but rather the number of the basis functions in use to approximate the aberration phase.
This makes the modal form approach a feasible choice for very large-scale phase retrieval in practice since the aberration phase can often be approximated with a good accuracy using a small number of first Zernike modes.
In the context of \emph{phase retrieval with sparse phase constraint}, both of the zonal and modal form approaches play their own important roles.
In the zonal formulation, the sparse phase constraint is with respect to the canonical basis and hence the analysis of the problem as well as its solution methods can be obtained nicely without much technical argument.
This paper is devoted to this task.
It is worth mentioning that the analysis in this formulation can be useful, for instance, in characterizing phase-only objects such as microlenses, phase-contrast microscopy, optical path difference microscopy and in Fourier ptychography, where the phase object (e.g., blood cells) occupies less than 10\% of the whole filed.
However, detailed considering of such applications is beyond the scope of this paper.
In the modal formulation, the sparse phase constraint is with respect to the orthogonal basis of Zernike polynomials, as explained earlier, this is an often case of practical application and indeed has drawn our first attention to \emph{phase retrieval with sparse phase constraint}.
This application task will be conducted in a follow-up paper.

Since a PSF image captures the complex object GPF via the squared amplitude of its Fourier transform, multiple PSF images are, on one hand, usually needed for retrieving the phase of the GPF up to a total piston term.
On the other hand, simultaneously measuring several PSF images is not always a simple task for many applications in practice, especially for those involving real time imaging systems.
A fundamental reason is that the use of double exposure procedures for such a measurement process would inevitably generate unwanted noise to the obtained data.
It is known that under certain circumstances the phase can be estimated from a single intensity PSF image, see for example \cite{GerSax72,BruVisVer17}.
In this paper, we are interested in phase retrieval with sparse phase constraint given only a single intensity PSF image.
To continue the discussion it is essential to clearly distinguish the phase retrieval with sparse phase constraint from the \emph{sparse signal recovery} (SSR) problem which is widely known and also well investigated in the literature \cite{MukSee12,SheBecEld14,PauBecEldSab18}.
The latter problem (which is also referred to as sparse phase retrieval in the literature, but not in this paper!) is the problem \eqref{generic PR} with the assumption that $x$ is sparse while the former one is also the problem \eqref{generic PR} but with the assumption that the phase (argument) of $x$ is sparse.
Since the latter assumption on the sparsity of $\arg(x)$ does not imply any sparsity property of $x$,
the sparse phase retrieval problem considered in this paper is different from the known SSR problem.
Indeed, to our awareness this paper considers the phase retrieval with sparse phase constraint for the first time.
As a negative consequence, we found no existing phase retrieval algorithms which can be adapted efficiently for this problem.

In summary, this paper deals with phase retrieval problem with sparse phase constraint given a single intensity PSF image in the zonal form formulation as below, and the research goal is to design and analyze an efficient solution algorithm for it.

\textbf{Phase retrieval with sparse phase constraint.} This paper considers the sparse phase retrieval (SPR) problem of
\begin{equation}\label{SPR}
\begin{aligned}
\mbox{finding}\quad &x = \chi\odot \exp\paren{j\paren{\Phi^d+\Phi}}\in \mathbb{C}^{n\times n}
 \\
&\mbox{such that}\quad |\fft(x)|^2 = b + \varepsilon\quad \mbox{and}\quad \norm{\Phi}_0 \le s.
\end{aligned}
\end{equation}
where $\chi\in \mathbb{R}_+^{n\times n}$ is the known amplitude modulation, $\Phi^d\in \mathbb{R}^{n\times n}$ is a known phase diversity, $\Phi\in (-\pi,\pi]^{n\times n}$ is the unknown variable, $\fft$ is the (discrete) \emph{2-dimensional Fourier transform}, $b\in \mathbb{R}_+^{n\times n}$ is the measured intensity image of $x$, $\varepsilon \in \mathbb{R}^{n\times n}$ is unknown noise, and $s$ is a priori upper bound of the sparsity level of $\Phi$.
In this paper, $\chi$ is assumed to be constant.
For the simplicity of terminology, when $x = \chi\odot \exp\paren{j\paren{\Phi^d+\Phi}}$ is a solution to \eqref{SPR}, $\Phi$ is often called a \emph{phase solution} to that problem accordingly.
Note that the phase diversity $\Phi^d$ has an important role in applications though it brings no differences to \eqref{SPR} in terms of mathematics.

Inspired by the success of the sparsity regularization scheme incorporated into projection algorithms for solving the \emph{affine sparse feasibility} problem reported in \cite{HesLukNeu14}, in this paper we show that such a regularization strategy but applied to the phase instead of the signal is efficient for SPR provided that the amplitude modulation $\chi$ is uniform.
It is worth emphasizing that the latter condition is crucial for all the analysis and discussion in this paper.
This new regularization scheme is mathematically described as a rotation within $\mathbb{C}^{n\times n}$ but with direction varying in iterations.
We will prove in Section \ref{s:analysis} that this rotation is indeed a metric projection on an auxiliary set which is independent of iterations.
This result is rather unexpected since the sparsity regularization applied to the phase on $(-\pi,\pi]^{n\times n}$ turns out to be a metric projection on the underlying complex Hilbert space $\mathbb{C}^{n\times n}$.

The remainder of paper is organized as follows. Basic mathematical notation will be introduced in the rest of this section.
Section \ref{s:sparsity constraint} presents the new scheme of sparsity regularization applied to the phase.
In Section \ref{s:SROP} we design a natural and efficient algorithm for solving SPR, named SROP algorithm, based on the Gerchberg-Saxton algorithm and the regularization scheme introduced in Section \ref{s:sparsity constraint}.
In Section \ref{s:analysis}, SROP algorithm is proved to be the cyclic projections algorithm for solving a feasibility problem involving three auxiliary sets.
Based on recent knowledge about the latter algorithm reported in \cite{LukNguTam16}, we obtain convergence results for SROP algorithm by analyzing regularity properties of the three auxiliary sets.
Section \ref{s:numerical simulation} numerically demonstrates the effectiveness of the proposed sparsity regularization scheme by showing better empirical performance of SROP algorithm compared to the corresponding algorithm but without the regularization step.
The latter algorithm is nothing else but the famous Gerchberg-Saxton algorithm \cite{GerSax72}.
Since the SPR problem \eqref{SPR} is first considered in this paper and no existing phase retrieval algorithms are relevant to it, the focus of Section \ref{s:numerical simulation} is to demonstrate the success of our new regularization scheme rather than to compare SROP algorithm with the GS algorithm.
Concluding remarks are presented in Section \ref{s:conclusion}.

\textbf{Mathematical notation.} The underlying space in this paper is a complex Hilbert space $\mathcal{H}=\Cbb^{n\times n}$.
The Frobenius norm is denoted $\|\cdot\|$. The $0$-norm  $\norm{\cdot}_0$ of an object is the number of its nonzero entries.
The element-wise multiplication is denoted $\odot$.
The element-wise division $\frac{\;\cdot\;}{\;\cdot\;}$,
the element-wise absolute value $|\cdot|$ and the element-wise square root $\sqrt{\;\;}$ operations are also frequently used in this paper but without need for extra notation.
%$\Re(\cdot)$ and $\Im(\cdot)$ denote the real and the imaginary parts respectively of a complex object.
The distance to a set $\Omega\subset \mathcal{H}$
is defined by
\begin{equation*}%
\dist(\cdot, \Omega) \colon \mathcal{H}\to\Rbb_+\colon x\mapsto \inf_{w\in\Omega}\norm{x-w}
\end{equation*}%
and the set-valued mapping
\begin{equation*}%
\mmap{P_\Omega}{\mathcal{H}}{\Omega}\colon
x\mapsto \set{w\in \Omega}{\norm{x-w}=\dist(x,\Omega)}
\end{equation*}%
is the corresponding \emph{projection operator}.
A selection $w\in P_\Omega(x)$ is called a {\em projection} of $x$ on $\Omega$.
An iterative sequence $x_{k+1}\in T(x_k)$ generated by an operator/algorithm $T:\mathcal{H}\rightrightarrows \mathcal{H}$ is said to \emph{converge R-linearly} to $x^*$ with rate $c\in [0,1)$ if there is a constant $\gamma>0$ such that
\[
\norm{x_k-x^*} \le \gamma c^k\quad \forall k\in \Nbb.
\]
Dealing with objects of size $n\times n$, we frequently use the set of all indices
\begin{equation}\label{J big}
\mathcal{J} \equiv \{(r,c) \mid 1\le r,c,\le n\}
\end{equation}
and the set of all $s$-element subsets of $\mathcal{J}$ by
\[
\mathcal{J}_s \equiv \{J \subset \mathcal{J} \mid J \text{ has } s \text{ elements}\}.
\]
For a real or complex object $\mathcal{O}$ of size ${n\times n}$, we define the set (see \cite[equation (33)]{BauLukPhaWan14} or \cite[equation (17)]{HesLukNeu14})
\begin{equation}\label{C_s}
  C_s(\mathcal{O}) \equiv \left\{J \in \mathcal{J}_s \mid \min_{\xi\in J}|\mathcal{O}(\xi)| \ge \max_{\xi\in \mathcal{J}\setminus J}|\mathcal{O}(\xi)| \right\}.
\end{equation}

Our other basic notation is standard; cf. \cite{DonRoc14,Mord06,VA}.
The open unit ball in $\mathcal{H}$ is denoted $\mathbb{B}$.
$\mathbb{B}_\delta(x)$ stands for the open ball with radius $\delta>0$ and center $x$.

\section{Sparsity regularization on phase}\label{s:sparsity constraint}

This section analyzes the novel idea of this paper.
In the literature of sparse signal recovery and sparse feasibility, e.g., \cite{PauBecEldSab18,MukSee12,HesLukNeu14}, \emph{sparsity constraint on the signal} has been shown to be useful for the retrieval process.
In this section we reveal that \emph{sparsity constraint on the phase} can be incorporated to efficiently solve SPR problem \eqref{SPR}.
In view of the profound difference between SPR and SSR explained in Section \ref{s:intro}, the analysis developed in this section is new to our awareness.
\bigskip

To begin, let us define on $\mathbb{C}^{n\times n}$ the three constraint sets in accordance with the SPR problem \eqref{SPR} as follows:
\begin{equation}\label{Omega_i}
  \begin{aligned}
  &\Omega_1 \equiv \parennn{x\in \mathbb{C}^{n\times n}\mid |\fft(x\odot\exp(j\Phi^d))|^2 = b},
  \\
  &\Omega_2 \equiv \parennn{x\in \mathbb{C}^{n\times n}\mid |x| = \chi},
  \\
  &\Omega_3 \equiv \parennn{x\in \Omega_2 \mid \|\arg(x)\|_0 \le s},
\end{aligned}
\end{equation}
where $\arg(x)$ denotes the element-wise argument of $x$ taking values in $(-\pi,\pi]$ and $s\in\mathbb{N}$.

The set $\Omega_3$ describes the sparsity constraint on the phase and a natural idea to incorporate it into solution methods for SPR \eqref{SPR} is to perform an additional projection step onto this set.
For uniform amplitude modulation $\chi$ as assumed, the next fundamental lemma provides an explicit form of $P_{\Omega_3}$.

\begin{lemma}[projector $P_{\Omega_3}$]\label{l:P_Omega_3}
For any $x=\chi \odot \exp\paren{j\Phi} \in \Omega_2$, it holds that
\begin{multline}\label{P_chi}
P_{\Omega_3}(x) = \big{\{}\chi \odot \exp\paren{j\varphi} \mid \exists J \in C_s(\Phi)\\
 \text{ such that } \varphi = \Phi \text{ on } J \text{ and vanishes elsewhere}\big{\}},
\end{multline}
where the mapping $C_s(\cdot)$ is defined at \eqref{C_s}.
%\begin{align}\label{P_chi}
%P_{\Omega_3}(x) = \left\{\chi \odot \exp\paren{j\varphi} \mid \exists J \in C_s(\Phi) \text{ such that } \varphi = \Phi \text{ on } J %\text{ and vanishes elsewhere}\right\}.
%\end{align}
\end{lemma}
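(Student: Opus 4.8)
The plan is to reduce the computation of $P_{\Omega_3}(x)$ to an entrywise scalar problem, using crucially that $\chi$ is constant. First I would parametrize the target set: since $\Omega_3\subset\Omega_2$, every $w\in\Omega_3$ has the form $w=\chi\odot\exp(j\psi)$ with $\psi=\arg(w)\in(-\pi,\pi]^{n\times n}$ and $\norm{\psi}_0\le s$, and by definition $P_{\Omega_3}(x)$ collects those $w$ minimizing $\norm{x-w}$. Writing $c$ for the common value of the constant amplitude $\chi$, the Frobenius norm splits over the index set $\mathcal{J}$ of \eqref{J big}:
\[
\norm{x-w}^2=\sum_{\xi\in\mathcal{J}}\parennnn{c\,e^{j\Phi(\xi)}-c\,e^{j\psi(\xi)}}^2
=c^2\sum_{\xi\in\mathcal{J}}\parennnn{e^{j\Phi(\xi)}-e^{j\psi(\xi)}}^2 .
\]

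Next I would apply the elementary identity $\parennnn{e^{ja}-e^{jb}}^2=2\paren{1-\cos(a-b)}$ to rewrite the objective as $2c^2\sum_{\xi\in\mathcal{J}}\paren{1-\cos(\Phi(\xi)-\psi(\xi))}$. Every summand is nonnegative, so once the support $J\subset\mathcal{J}$ of $\psi$ (with $\card J\le s$) is fixed the minimization decouples across entries: on $J$ the value $\psi(\xi)$ is unconstrained and the unique minimizer is $\psi(\xi)=\Phi(\xi)$, contributing $0$; off $J$ we are forced to take $\psi(\xi)=0$, contributing $1-\cos(\Phi(\xi))$. Hence minimizing $\norm{x-w}$ is equivalent to choosing $J$ so as to minimize $\sum_{\xi\in\mathcal{J}\setminus J}\paren{1-\cos(\Phi(\xi))}$.

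The final step is to identify the optimal supports. Because $\theta\mapsto 1-\cos\theta$ is even and strictly increasing on $[0,\pi]$, and each $\Phi(\xi)\in(-\pi,\pi]$, the penalty $1-\cos(\Phi(\xi))$ is an order-preserving function of $\parennnn{\Phi(\xi)}$. Since every penalty is nonnegative it is never disadvantageous to enlarge $J$, so one may take $\card J=s$; minimizing the total penalty over $\mathcal{J}\setminus J$ then forces $J$ to collect the $s$ indices carrying the largest values of $\parennnn{\Phi(\xi)}$, i.e. $\min_{\xi\in J}\parennnn{\Phi(\xi)}\ge\max_{\xi\in\mathcal{J}\setminus J}\parennnn{\Phi(\xi)}$, which is precisely the condition $J\in C_s(\Phi)$ from \eqref{C_s}. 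Setting $\varphi=\Phi$ on such a $J$ and $\varphi=0$ elsewhere realizes the minimal distance, and conversely each such $\varphi$ is optimal; this yields the set equality \eqref{P_chi}.

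The one point requiring genuine care is the passage from ``largest penalty'' to ``largest $\parennnn{\Phi(\xi)}$,'' and this is exactly where constancy of $\chi$ is indispensable: for non-uniform $\chi$ the objective would carry weights $\chi(\xi)^2$ and would no longer reduce to pure thresholding of $\parennnn{\Phi}$. The remaining subtleties---reconciling the constraint $\norm{\psi}_0\le s$ with the exactly-$s$-element members of $C_s(\Phi)$, and handling ties in the ordering of $\parennnn{\Phi(\xi)}$---are benign: nonnegativity of the penalties settles the former, while ties are precisely what render both $C_s(\Phi)$ and $P_{\Omega_3}(x)$ genuinely set-valued.
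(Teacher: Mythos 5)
Your proof is correct and follows essentially the same route as the paper's: both exploit the constancy of $\chi$ to reduce the distance to an entrywise sum, use $\parennnn{e^{ja}-e^{jb}}^2=2\paren{1-\cos(a-b)}$ together with the monotonicity of $\theta\mapsto 1-\cos\theta$ on $[0,\pi]$, and conclude by a sorting argument that the optimal supports are exactly the members of $C_s(\Phi)$. If anything, your decoupled minimization over supports and values is slightly more complete, since it establishes the full set equality in \eqref{P_chi} rather than only verifying that the stated candidates attain the minimal distance.
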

\begin{proof}
Let us take any $J \in C_s(\Phi)$ and $\varphi \in \Rbb^{n\times n}$ such that
\[
\varphi(\xi) =
\begin{cases}
\Phi(\xi) & \text{ if } \xi\in J,\\
0 & \text{ if } \xi \in \mathcal{J}\setminus J,
\end{cases}
\]
where the set of all indices $\mathcal{J}$ is defined at \eqref{J big}.
It suffices to check that
\begin{equation*}
  \norm{x-\chi\odot \exp\paren{j\varphi}} \le \norm{x-w}\quad \forall w\in \Omega_3.
\end{equation*}
Since $\chi$ is uniform as assumed throughout this paper, the above condition amounts to
\begin{equation}\label{P_Omega_3}
  \norm{\exp\paren{j\Phi}-\exp\paren{j\varphi}} \le \norm{\exp\paren{j\Phi}-\exp\paren{j\phi}}\quad \forall \phi\in \Rbb^{n\times n} \text{ with } \norm{\phi}_0\le s.
\end{equation}
We first claim that
\begin{equation}\label{xi<xi^c}
|\exp(j\Phi(\xi^c))-1|\le |\exp(j\Phi(\xi))-1|
\quad \forall \xi\in J,\;\forall \xi^c\in \mathcal{J}\setminus J.
\end{equation}
Indeed, thanks to $J\in C_s(\Phi)$ and the definition \eqref{C_s} of $C_s(\Phi)$ it holds that
\begin{equation}\label{Psi_k}
  0\le|\Phi(\xi^c)|\le |\Phi(\xi)|\le \pi \quad \forall \xi\in J,\;\forall \xi^c\in \mathcal{J}\setminus J.
\end{equation}
Then we have for all $\xi\in J$ and $\xi^c\in \mathcal{J}\setminus J$ that
\begin{align}\nonumber
|\exp(j\Phi(\xi^c))-1| &= \sqrt{\paren{\cos\Phi(\xi^c)-1}^2 + \sin^2\Phi(\xi^c)}
\\\label{sqrt(2-cos)}
&= \sqrt{2-2\cos|\Phi(\xi^c)|}
\\\nonumber
&\le \sqrt{2-2\cos|\Phi(\xi)|}
\\\nonumber
&= |\exp(j\Phi(\xi))-1|,
\end{align}
where the only inequality follows from \eqref{Psi_k}.
Hence \eqref{xi<xi^c} has been proved.

We now can prove \eqref{P_Omega_3}.
Note from the definition of $\varphi$ that
\begin{align}\label{varphi=}
    \norm{\exp\paren{j\Phi}-\exp\paren{j\varphi}} = \sum_{\xi^c\in \mathcal{J}\setminus J}|\exp(j\Phi(\xi^c))-1|.
\end{align}
It is also clear that
\begin{align}\label{phi>}
    \norm{\exp\paren{j\Phi}-\exp\paren{j\phi}} \ge \sum_{\phi(\xi)=0}|\exp(j\Phi(\xi))-1|.
\end{align}
Since $\norm{\phi}_0\le s$, it holds that
\[
 \card\paren{\{\xi \mid \phi(\xi)=0\}}\ge n^2-s = \card\paren{\{\xi^c \mid \xi^c\in \mathcal{J}\setminus J\}},
 \]
where $\card(\cdot)$ denotes the cardinal of set.
The inequality \eqref{xi<xi^c} then implies that
\begin{align}\label{sum<sum}
    \sum_{\xi^c\in \mathcal{J}\setminus J}|\exp(j\Phi(\xi^c))-1| \le \sum_{\phi(\xi)=0}|\exp(j\Phi(\xi))-1|.
\end{align}
The desired estimate \eqref{P_Omega_3} now follows from
\eqref{varphi=}, \eqref{phi>} and \eqref{sum<sum}, and the proof is complete.
\hfill{$\square$}
\end{proof}

\begin{remark}\label{r:only for uniform}
Lemma \ref{l:P_Omega_3} is crucial for the analysis in this paper and rather unexpected since the  sparsity regularization applied to the phase object on $(-\pi,\pi]^{n\times n}$ turns out to be a metric projection on the underlying space $\mathbb{C}^{n\times n}$.
It is worth emphasizing that the calculation of $P_{\Omega_3}$ is technically nontrivial for general (nonuniform) amplitude modulation $\chi$.
\end{remark}

\section{Proposed algorithm for SPR}\label{s:SROP}

In accordance with the new analysis of Section \ref{s:sparsity constraint}, this section proposes an efficient solution algorithm for solving the sparse phase retrieval \eqref{SPR}.

\begin{alg}\label{al:SROP}[Sparsity regularization on phase (SROP) algorithm for SPR problem \eqref{SPR}]\\
\emph{Input:}

$b \in \mathbb{R}_+^{n\times n}$ --- intensity PSF image

$s$ --- sparsity parameter

$\Phi_0$ --- initial guess for $\Phi$

$\tau$ --- tolerance threshold.
\\
\emph{Iteration process}: given $\Phi_k$
%\bigskip
\begin{enumerate}[(i)]
  \item\label{al:x_k} $x_k = \chi\odot\exp\paren{j\Phi_k}$
  \item\label{al:fft} $X_k = \fft\paren{x_k\odot \exp\paren{j\Phi^d}}$ --- phase diversity applied and Fourier transform
  \item\label{al:scale} $Y_k = \sqrt{b}\odot \frac{X_k}{|X_k|}$ --- amplitude constraint
  \item\label{al:ifft} $y_k = \exp\paren{-j\Phi^d}\odot\fft^{-1}(Y_k)$ --- inverse Fourier transform and phase diversity corrected
  \item\label{al:amplitude} $z_k = \chi\odot\exp\paren{j\arg\paren{y_k}}$ --- amplitude modulation
  \item\label{al:index} find $J \in C_s(\varphi)$ where $\varphi \equiv \arg(z_k)$, and set
   \begin{equation}\label{truncate}
     \Phi_{k+1}(\xi) =
     \begin{cases}
     \varphi(\xi) & \text{ if } \xi\in J,\\
     0 & \text{ if } \xi\in \mathcal{J}\setminus J.
     \end{cases}
   \end{equation}
  %  \item\label{al:truncate} $\Phi_{k+1} = \Psi_{k}$ on $\mathcal{I}_k$ and zero elsewhere.
\end{enumerate}
\emph{Stopping criterion:} $\norm{\Phi_k-\Phi_{k+1}}<\tau$ or $k$ exceeds the maximal number of iterations.
%\bigskip
\\
\emph{Output:} $\widehat{\Phi} = \Phi_{end}$ --- the estimate phase.
\end{alg}

Algorithm \ref{al:SROP} is nothing else but a direct combination of the Gerchberg-Saxton (GS) algorithm (steps \ref{al:x_k}--\ref{al:amplitude}) and the sparsity regularization on the phase (step \ref{al:index}) analyzed in Section \ref{s:sparsity constraint}.
When applied to SPR \eqref{SPR}, the latter regularization step brings significant features of SROP compared to the original GS method, see Section \ref{s:numerical simulation}.

The convergence analysis of the SROP algorithm is provided in the next section.

\section{Convergence analysis}\label{s:analysis}

In this section we will establish local convergence results for SROP algorithm by first showing that it is indeed the cyclic projections algorithm involving three auxiliary sets.
Then the convergence theory for the cyclic projections method recently developed in \cite{LukNguTam16} can be applied to SROP algorithm.

Let us first shows that SROP algorithm is nothing else but the cyclic projections method $P_{\Omega_3}P_{\Omega_2}P_{\Omega_1}$ where the sets $\Omega_i$ $(i=1,2,3)$ are defined at \eqref{Omega_i}.

\begin{lemma}\label{l:SROP=CP} The following statements hold true.
\begin{enumerate}[(i)]
  \item The combination of steps \ref{al:fft}, \ref{al:scale} and \ref{al:ifft} of Algorithm \ref{al:SROP} amounts to $y_k\in P_{\Omega_1}(x_k)$.
  \item Step \ref{al:amplitude} of Algorithm \ref{al:SROP} amounts to $z_{k}\in P_{\Omega_2}(y_k)$.
  \item The combination of steps \ref{al:index} and \ref{al:x_k} of Algorithm \ref{al:SROP} amounts to $x_{k+1}\in P_{\Omega_3}(z_k)$.
\end{enumerate}
In other words, the SROP algorithm is characterized by the fixed point operator $T$ given by:
\begin{equation}\label{T_SROP}
  x \mapsto T(x) \equiv P_{\Omega_3}P_{\Omega_2}P_{\Omega_1}(x)\quad \forall x\in\Cbb^{n\times n}.
\end{equation}
\end{lemma}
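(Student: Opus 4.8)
The plan is to verify the three projection characterizations (i)--(iii) one at a time and then compose them to read off the fixed-point operator \eqref{T_SROP}. Parts (ii) and (iii) are essentially entrywise statements, whereas part (i) is the only one requiring a genuine structural argument, so I would dispatch (ii) and (iii) quickly and concentrate the effort on (i).

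For part (i), the key observation is that the map $U\colon x\mapsto \fft(x\odot\exp(j\Phi^d))$ is unitary: pointwise multiplication by the unit-modulus array $\exp(j\Phi^d)$ is an isometry of $\Cbb^{n\times n}$, and (with the standard unitary normalization of $\fft$) the Fourier transform is unitary, so their composition $U$ is a bijective isometry with $U^{-1}(Z)=\exp(-j\Phi^d)\odot\fft^{-1}(Z)$. Writing $M\equiv\set{Z\in\Cbb^{n\times n}}{|Z|=\sqrt{b}}$, we have $\Omega_1=U^{-1}(M)$, and since $\norm{x-w}=\norm{U(x)-U(w)}$ for all $x,w$, minimizing over $w\in\Omega_1$ is the same as minimizing over $U(w)\in M$; hence $P_{\Omega_1}=U^{-1}\circ P_M\circ U$. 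It then remains to note that $P_M$ acts entrywise as the magnitude correction $Z\mapsto \sqrt{b}\odot\frac{Z}{|Z|}$ (set-valued precisely at entries where $Z$ vanishes). Chaining these identities gives $P_{\Omega_1}(x_k)=U^{-1}(P_M(X_k))\ni U^{-1}(Y_k)=y_k$, which is exactly steps \ref{al:fft}--\ref{al:ifft}.

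For part (ii), $\Omega_2=\set{x}{|x|=\chi}$ is a product of circles, whose projection is the entrywise amplitude correction retaining the phase; thus $z_k=\chi\odot\exp(j\arg(y_k))\in P_{\Omega_2}(y_k)$, again set-valued only where $y_k$ vanishes. For part (iii), I would invoke Lemma \ref{l:P_Omega_3} directly: since $z_k\in\Omega_2$ with $\arg(z_k)=\varphi$, that lemma gives $P_{\Omega_3}(z_k)=\parennn{\chi\odot\exp(j\psi)\mid \exists J\in C_s(\varphi),\ \psi=\varphi\text{ on }J,\ \psi=0\text{ elsewhere}}$. Step \ref{al:index} selects such a $J$ and defines $\Phi_{k+1}$ by precisely this truncation, so step \ref{al:x_k} at the next iteration yields $x_{k+1}=\chi\odot\exp(j\Phi_{k+1})\in P_{\Omega_3}(z_k)$. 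Composing (i)--(iii) then gives $x_{k+1}\in P_{\Omega_3}P_{\Omega_2}P_{\Omega_1}(x_k)=T(x_k)$, establishing \eqref{T_SROP}.

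The main obstacle, and the only nonroutine point, is part (i): one must be precise that the combined phase-diversity-and-Fourier map is a bijective isometry, so that the projection onto the preimage set $\Omega_1$ factors through $U$ via $P_{U^{-1}(M)}=U^{-1}P_M U$. This hinges on the unitary normalization of $\fft$ and on identifying $P_M$ entrywise. The set-valuedness at zero entries of $X_k$ or $y_k$ must be tracked, but it only weakens the equalities to the inclusions $\in$ already stated, so it poses no real difficulty.
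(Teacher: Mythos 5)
Your proposal is correct and follows essentially the same route as the paper: the paper likewise writes $\Omega_1$ as the preimage of the magnitude set $C=\set{y}{|y|^2=b}$ under the unitary composition of the phase-diversity multiplication $D$ and $\fft$, obtains $P_{\Omega_1}=D^{-1}\circ\fft^{-1}\circ P_C\circ\fft\circ D$, and disposes of (ii) by the definition of $\Omega_2$ and of (iii) by Lemma \ref{l:P_Omega_3}. Your explicit tracking of the set-valuedness at vanishing entries is a minor point of extra care that the paper leaves implicit.
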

\begin{proof}
$(i)$ We first observe that
\begin{equation}\label{Omega1'}
  \Omega_1 = \{x\in \mathbb{C}^{n\times n} \mid \fft\paren{x\odot \exp\paren{j\Phi^d}} \in C\},
\end{equation}
where the constraint set $C$ is given by
\begin{equation}\label{C}
  C \equiv \parennn{y\in \mathbb{C}^{n\times n}\mid |y|^2 = b}.
\end{equation}
Let us denote the linear operator $D:\mathbb{C}^{n\times n}\to \mathbb{C}^{n\times n}$ by
\begin{equation}\label{D}
  x \mapsto D(x) \equiv x\odot \exp\paren{j\Phi^d}\quad \forall x\in \mathbb{C}^{n\times n}.
\end{equation}
It is clear that $D$ is a unitary transform since the energy of any signal in $\mathbb{C}^{n\times n}$ is invariant with respect to $D$.
Its inverse is also unitary and takes the form $D^{-1}(x)=x\odot \exp\paren{-j\Phi^d}$.
Now using \eqref{Omega1'}, we obtain that
\begin{align}\label{P_Omega_1=}
  P_{\Omega_1}(x) = D^{-1}\circ\fft^{-1}\paren{P_C\paren{\fft\circ D(x)}} \quad \forall x\in \mathbb{C}^{n\times n}.
\end{align}
Comparing to steps \ref{al:fft}, \ref{al:scale} and \ref{al:ifft} of Algorithm \ref{al:SROP} and using \eqref{P_Omega_1=}, we have that
\begin{align*}
 y_k &= \exp\paren{-j\Phi^d}\odot\fft^{-1}\paren{
\sqrt{b}\odot \frac{X_k}{|X_k|}}
   \\
     &= \exp\paren{-j\Phi^d}\odot\fft^{-1}\paren{
\sqrt{b}\odot \frac{\fft\paren{x_k\odot \exp\paren{j\Phi^d}}}{|\fft(x_k\odot \exp\paren{j\Phi^d})|}}
   \\
     &= D^{-1}\circ \fft^{-1}\paren{P_C\paren{\fft\circ D(x_k)}} = P_{\Omega_1}(x_k) \quad \forall k\in \Nbb.
\end{align*}

$(ii)$ This statement follows from step \ref{al:amplitude} of Algorithm \ref{al:SROP} and the definition of $\Omega_2$.

$(iii)$ This statement follows from Lemma \ref{l:P_Omega_3}.
\hfill{$\square$}
\end{proof}

To this end, SROP algorithm is the cyclic projections algorithm for solving the feasibility problem of
\begin{equation}\label{auxi_FP}
  \mbox{finding}\quad x \in \Omega_1 \cap \Omega_2 \cap \Omega_3,
\end{equation}
which is an equivalent reformulation of the SPR problem \eqref{SPR} (in the noise-free case) since $x$ is a solution to \eqref{SPR} if and only if it is a solution to \eqref{auxi_FP}.

As a result, one can fully applies the convergence theory for the cyclic projections algorithm developed in \cite[Section 3.1]{LukNguTam16} to SROP algorithm.
It is worth mentioning that the above mentioned theory also encompasses the inconsistent feasibility which in the setting of this paper corresponds to SROP algorithm applied to SPR \eqref{SPR} with noise.
To avoid quite formidable technical details inherently arising for inconsistent feasibilities, we will present the convergence results in the noise-free setting.
We next present an amount of mathematical material which is sufficient for formulating a local linear convergence criterion in a concise and memorable statement.
Since phase retrieval is a typical nonconvex problem and our approach requires no convex relaxations, we can only establish local convergence result for SROP algorithm as iterations generated by the fixed point operator $T$ given by \eqref{T_SROP}.

Recall that a set $\Omega$ is called \emph{prox-regular} at a point $x^*\in \Omega$ if the projector $P_{\Omega}$ is single-valued around $x^*$ \cite{PolRocThi00}.
The analysis regarding prox-regularity in the context of the phase retrieval problem was first given in \cite{Luk08}.

The following lemma provides the geometry of the auxiliary feasibility problem \eqref{auxi_FP}.

\begin{lemma}\label{l:prox-reg} The following statements regarding the sets $\Omega_i$ ($i=1,2,3$) hold true.
\begin{enumerate}
  \item[(i)] The set $\Omega_1$ is prox-regular at every point of it.
  \item[(ii)] The set $\Omega_2$ is prox-regular at every point of it.
  \item[(iii)] If $s$ is the sparsity of the solutions with the sparsest phase to problem \eqref{auxi_FP}, then the set $\Omega_3$ is prox-regular at every point $x^* \in \Omega_1 \cap \Omega_2 \cap \Omega_3$.
\end{enumerate}
As a consequence, in the setting of item $(iii)$, the sets $\Omega_i$ ($i=1,2,3$) are prox-regular at every solution to problem \eqref{auxi_FP}.
\end{lemma}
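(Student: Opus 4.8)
\emph{Proof strategy.} The plan is to handle the three items separately, dispatching (i) and (ii) through explicit single-valued projector formulas (so that prox-regularity follows directly from its definition) and reserving the real work for (iii), where the minimal-sparsity hypothesis is what locally isolates a single smooth piece of $\Omega_3$.

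For (ii), I would recall that the projector onto $\Omega_2$ is exactly the amplitude-modulation map $P_{\Omega_2}(x)=\chi\odot \frac{x}{|x|}$ appearing in step \ref{al:amplitude}, which is single-valued at every $x$ having no zero entry. Since every point of $\Omega_2$ satisfies $|x|=\chi>0$ entrywise, this map is single-valued on a neighbourhood of each point of $\Omega_2$, which is prox-regularity by definition; equivalently, $\Omega_2$ is a product of circles of radius $\chi$, a smooth compact manifold. For (i), I would use the unitary reduction already established in Lemma \ref{l:SROP=CP}: with $D$ and $\fft$ unitary one has $\Omega_1=D^{-1}\fft^{-1}(C)$ and $P_{\Omega_1}=D^{-1}\circ\fft^{-1}\circ P_C\circ\fft\circ D$, so prox-regularity transfers isometrically from $C$ to $\Omega_1$. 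The set $C=\parennn{y\mid |y|^2=b}$ is a product over indices of circles (where $b(\xi)>0$) and single points $\{0\}$ (where $b(\xi)=0$), with explicit projector $P_C(y)=\sqrt{b}\odot \frac{y}{|y|}$ single-valued near every point of $C$; hence $\Omega_1$ is prox-regular at each of its points.

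The crux is (iii). The first step is to write $\Omega_3=\bigcup_{J\in\mathcal{J}_s}\Omega_3^J$, where $\Omega_3^J\equiv\parennn{\chi\odot\exp(j\Phi)\mid \operatorname{supp}(\Phi)\subseteq J}$ is a smooth torus (a product of $|J|$ circles with the remaining entries pinned to $\chi$), hence prox-regular at each of its points. I would then invoke the localization principle that a finite union of smooth manifolds is prox-regular at any point lying on exactly one of them while staying bounded away from the others. Here the hypothesis enters decisively: since $s$ is the minimal phase-sparsity among solutions, any $x^*\in\Omega_1\cap\Omega_2\cap\Omega_3$ satisfies both $\norm{\arg(x^*)}_0\le s$ (membership in $\Omega_3$) and $\norm{\arg(x^*)}_0\ge s$ (minimality), whence $\norm{\arg(x^*)}_0=s$ exactly. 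Thus $x^*$ lies on the single torus indexed by $J^*=\operatorname{supp}(\arg(x^*))$ and on no other. To separate the remaining tori I would argue that for each $J'\ne J^*$ there is an index $\xi_0\in J^*\setminus J'$ at which every $w\in\Omega_3^{J'}$ has $w(\xi_0)=\chi$ while $x^*(\xi_0)=\chi\exp(j\Phi^*(\xi_0))$ with $\Phi^*(\xi_0)\ne 0$, so that $\dist(x^*,\Omega_3^{J'})\ge \chi\,|\exp(j\Phi^*(\xi_0))-1|>0$. Consequently, on a small enough ball $\mathbb{B}_\delta(x^*)$ the set $\Omega_3$ coincides with $\Omega_3^{J^*}$, and prox-regularity of $\Omega_3$ at $x^*$ follows from that of this single smooth torus.

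I expect the main obstacle to be exactly this localization in (iii): one must make rigorous that, for $x$ near $x^*$, every nearest point of $\Omega_3$ already lies on $\Omega_3^{J^*}$ (combining the uniform separation of the other tori with $\dist(x,\Omega_3^{J^*})\to 0$ as $x\to x^*$), and it is precisely the minimal-sparsity hypothesis that excludes the degenerate case $\norm{\arg(x^*)}_0<s$, in which $x^*$ would sit at the intersection of several tori and the projector could be genuinely multivalued. The stated consequence is then immediate: every solution to \eqref{auxi_FP} lies in $\Omega_1\cap\Omega_2\cap\Omega_3$, so items (i), (ii) and (iii) jointly yield prox-regularity of all three sets at every such solution.
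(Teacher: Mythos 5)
Your proposal is correct and follows essentially the same route as the paper: items (i) and (ii) by reducing to Cartesian products of circles under unitary transforms, and item (iii) by decomposing $\Omega_3$ into $\binom{n^2}{s}$ smooth components, using the minimal-sparsity hypothesis to show $x^*$ lies on exactly one of them, and localizing via the same separation quantity $r\sqrt{2-2\cos\theta}$ (the paper phrases this as $\Omega_3\cap U=L(x^*)\cap U$ rather than as distance bounds to the other tori, but the computation is identical). No gaps.
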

\begin{proof}
$(i)$ The proof follows the idea of \cite[Section 3.1]{Luk08}. We first note that the set $C$ defined at \eqref{C} is prox-regular at every point of it since it is the Cartesian product of a number of circles which are typical examples of prox-regularity.
      It is clear from the definition of $\Omega_1$ that $\Omega_1=\fft^{-1}D^{-1}(C)$ with $D$ given by \eqref{D}.
      But $\fft^{-1}D^{-1}$ which is a unitary transform does not affect the geometry properties of $C$ and hence the statement is proved.

$(ii)$ The argument for item $(i)$ also encompasses statement $(ii)$.

$(iii)$ We first note that $\Omega_3$ is the union of $\binom{n^2}{s}$ sets each of which, we call a \emph{component} of $\Omega_3$, is the Cartesian product of $s$ copies of the circle $\{c\in \Cbb \mid |c|=r\}$ and $n^2-s$ singletons $\{1\}$, where $r>0$ is the constant value of $\chi$.

Note that each of the components of $\Omega_3$ is a prox-regular set thanks to statement $(i)$.
Let $x^* \in \Omega_1 \cap \Omega_2 \cap \Omega_3$.
By the definition of the sets $\Omega_i$ ($i=1,2,3$), we have $x^* = \chi\odot\exp\paren{j\Phi^*}$ and $\|\Phi^*\|_0=s$.
Let us denote $J(x^*)$ the set of indices corresponding to nonzero entries of $\Phi^*$.
Since $s$ is the sparsity of the solutions with the sparsest phase to problem \eqref{auxi_FP} as assumed,
there is the unique component of $\Omega_3$ which contains $x^*$, denoted $L({x^*})$.
Since $L({x^*})$ is prox-regular as mentioned above, all we need is to show the existence of a neighborhood $U$ of $x^*$ such that
\begin{equation}\label{Omega3=L}
  \Omega_3 \cap U = L({x^*}) \cap U.
\end{equation}
Let us define
\begin{align}\label{theta}
  \theta &\equiv \min\left\{|\Phi^*(\xi)| : \xi\in J({x^*})\right\} \in (0,\pi],
  \\
  \label{delta}
  \delta &\equiv r\sqrt{2-2\cos\theta\,}\; >\; 0.
\end{align}
We will show that the neighborhood
\begin{align}\label{U}
   U \equiv \parennn{x\in \mathbb{C}^{n\times n} \mid \|x-x^*\|_1<\delta}
\end{align}
of $x^*$ satisfies the desired property \eqref{Omega3=L}.
Since the inclusion $\paren{\Omega_3 \cap U} \supset \paren{L({x^*}) \cap U}$ is trivial, we now prove only the inverse one,
$\paren{\Omega_3 \cap U} \subset \paren{L({x^*}) \cap U}$.
Indeed, take any $x \in \Omega_3 \cap U$.
Note that  $|x| = \chi$ as $x \in \Omega_3 \subset \Omega_2$.
Let us write $x = \chi\odot\exp\paren{j\Phi}$.
Then for every index $\xi\in J({x^*})$, we have by the triangle inequality, \eqref{sqrt(2-cos)}, \eqref{U}, \eqref{theta} and \eqref{delta} successively that
\begin{align*}
  |x(\xi)-\chi(\xi)| &\ge |x^*(\xi)-\chi(\xi)| - |x^*(\xi)-x(\xi)|
  \\
  &= r\sqrt{2-2\cos\Phi^*(\xi)\,} - |x^*(\xi)-x(\xi)|
  \\
  &> r\sqrt{2-2\cos\Phi^*(\xi)\,} - \delta
  \\
  &\ge r\sqrt{2-2\cos\theta\,} - \delta = \delta - \delta = 0.
\end{align*}
This in particular implies that $\Phi(\xi) \neq 0$ for all indices $\xi\in J({x^*})$ and $\|\Phi\|_0=s$.
That is $x\in L({x^*})$ and hence the proof is complete.
\hfill{$\square$}
\end{proof}

It is worth mentioning that the condition on the uniform distribution of the amplitude $\chi$ is essential to both Lemmas \ref{l:SROP=CP} and \ref{l:prox-reg}.

We are now ready to state and prove a local linear convergence criterion for SROP algorithm.

\begin{theorem}[R-linear convergence of Algorithm \ref{al:SROP}]\label{t:convergence}
Let $\Phi^*$ be a sparsest phase solution to SPR problem \eqref{SPR} and $\|\Phi^*\|_0=s$.
Suppose that the set-valued mapping $\Psi \equiv \Id-T$ (where $T$ is given by \eqref{T_SROP}) is \emph{metrically subregular} at $x^* \equiv \chi\odot\exp\paren{j\Phi^*}$ for $0$, i.e., there are constants $\kappa>0$ and $\delta>0$ such that the inequality
\[
\kappa\dist(x,\Psi^{-1}(0)) \le \dist(0,\Psi(x))\quad \forall x\in \Bbb_{\delta}(x^*).
\]
Then every iterative sequence generated by $T$ converges R-linearly to a solution to \eqref{auxi_FP} (equivalently, the SPR problem \eqref{SPR}) provided that the initial point is sufficiently close to $x^*$.
\end{theorem}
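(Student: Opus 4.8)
The plan is to reduce the claim to the abstract linear-convergence criterion for the cyclic projections method established in \cite[Section~3.1]{LukNguTam16}, whose two hypotheses are precisely (a) that the fixed point operator $T$ is \emph{pointwise almost averaged} at the reference point, and (b) that $\Id-T$ is metrically subregular at that point for $0$. Hypothesis (b) is exactly what is assumed in the statement, so the substance of the proof is to verify (a) for the specific operator $T=P_{\Omega_3}P_{\Omega_2}P_{\Omega_1}$ arising from Algorithm \ref{al:SROP}, and then to invoke the cited theorem.

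First I would recall, via Lemma \ref{l:SROP=CP}, that the SROP iteration is exactly $x_{k+1}\in T(x_k)$ with $T=P_{\Omega_3}P_{\Omega_2}P_{\Omega_1}$, and that $x^*=\chi\odot\exp(j\Phi^*)\in\Omega_1\cap\Omega_2\cap\Omega_3$ is a fixed point of $T$, being a solution of \eqref{auxi_FP}. Next I would use Lemma \ref{l:prox-reg}, which guarantees that each of $\Omega_1,\Omega_2,\Omega_3$ is prox-regular at $x^*$ under the hypothesis $\|\Phi^*\|_0=s$. Prox-regularity of a set at a point yields local single-valuedness of its projector together with the fact that the projector is pointwise almost averaged there: on a sufficiently small ball $\mathbb{B}_{\delta_i}(x^*)$ the operator $P_{\Omega_i}$ satisfies the almost-averaging inequality with averaging constant $\alpha_i=1/2$ and a violation $\varepsilon_i\ge 0$ that tends to $0$ as $\delta_i\downarrow 0$. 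I would apply this to $i=1,2,3$.

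The remaining structural step is to propagate this property through the composition: since each $P_{\Omega_i}$ is pointwise almost averaged at $x^*$, the composition $T=P_{\Omega_3}P_{\Omega_2}P_{\Omega_1}$ is again pointwise almost averaged at $x^*$ on a common neighborhood, with an averaging constant $\alpha\in(0,1)$ and a combined violation $\varepsilon$ obtained from the $\alpha_i,\varepsilon_i$; here the local single-valuedness furnished by prox-regularity is what lets one pass the almost-averaging estimate along the three projection steps. With (a) established and (b) in force, the cyclic projections theorem of \cite[Section~3.1]{LukNguTam16} then delivers a neighborhood of $x^*$ and a rate $c\in[0,1)$ such that every $T$-orbit starting in that neighborhood converges R-linearly to a point of $\Omega_1\cap\Omega_2\cap\Omega_3$, i.e. to a solution of \eqref{SPR}.

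The main obstacle I anticipate is quantitative rather than conceptual: one must ensure that the accumulated violation $\varepsilon$ is small enough relative to the subregularity modulus $\kappa$ so that the resulting contraction factor $c$ is genuinely strictly below $1$. This is achieved by shrinking the neighborhood, which drives each $\varepsilon_i$ (hence $\varepsilon$) toward $0$ while $\kappa$ stays fixed by hypothesis; the only care needed is that the neighborhood shrinkage be compatible across the three projectors and with the radius $\delta$ on which metric subregularity is posited. A secondary technical point is the set-valuedness of $P_{\Omega_1}$ and $P_{\Omega_3}$ away from $x^*$, but prox-regularity renders all three projectors single-valued near $x^*$, so every selection used in the iteration obeys the same estimates and the argument goes through unchanged.
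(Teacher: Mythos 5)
Your proposal is correct and follows essentially the same route as the paper: prox-regularity of the three sets at $x^*$ (Lemma \ref{l:prox-reg}) gives almost firmly nonexpansive projectors with arbitrarily small violation, the composition $T=P_{\Omega_3}P_{\Omega_2}P_{\Omega_1}$ is therefore pointwise almost averaged near $x^*$ (the paper records the averaging constant $3/4$ via \cite[Proposition 2.4 $(iii)$]{LukNguTam16}), and combining this with the assumed metric subregularity of $\Id-T$ yields R-linear convergence by \cite[Corollary 2.3]{LukNguTam16}. Your closing remark about shrinking the neighborhood so that the accumulated violation is small relative to $\kappa$ is exactly the quantitative mechanism the paper relies on.
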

\begin{proof}
Since $s$ is the sparsity of the sparsest phase solutions to the SPR problem \eqref{SPR},
$\Phi^*$ is a sparsest phase solution to \eqref{SPR} if and only if $x^*=\chi\odot\exp\paren{j\Phi^*}$ is a solution to \eqref{auxi_FP}.
By Lemma \ref{l:prox-reg} the sets $\Omega_i$ ($i=1,2,3$) are prox-regular at $x^*$.
By \cite[Theorem 2.14 $(ii)$]{HesLuk13}, the projectors $P_{\Omega_i}$ ($i=1,2,3$) are \emph{almost firmly nonexpansive} \cite[Definition 2.2]{LukNguTam16} with violation that can be made arbitrarily small by shrinking the effective neighborhood of $x^*$ if necessary.
Then by \cite[Proposition 2.4 $(iii)$]{LukNguTam16}, there is a neighborhood $U$ of $x^*$ such that
the operator $T$ is almost averaged on $U$ with averaging constant $3/4$ and violation $\varepsilon$ that can be made arbitrarily small. Hence, taking also the metric subregularity of $\Id-T$ at $x^*$ for $0$ into account, we can directly apply \cite[Corollary 2.3]{LukNguTam16} to obtain local linear convergence of $T$. The proof is complete.
\hfill{$\square$}
\end{proof}

The notion of metric subregularity appearing in Theorem \ref{t:convergence} is amongst cornerstones of variational analysis and optimization theory with many important applications \cite{DonRoc14,KlaKum02}. In the context of feasibility problem, the metric subregularity is closely related to the mutual arrangement of the sets at the reference point \cite{HesLuk13,Kru06,Kru09,KruLukNgu18,KruLukNgu17,KruTha15,LukNguTam16}.
A deeper look at this type of regularity for the collection of sets $\{\Omega_1, \Omega_2, \Omega_3\}$ at $x^*$ involved in Theorem \ref{t:convergence} is obviously of importance but beyond the scope of this paper.

\section{Numerical simulation}\label{s:numerical simulation}

This section demonstrates that the sparsity regularization scheme introduced in Section \ref{s:sparsity constraint} is efficient for the sparse phase retrieval problem \eqref{SPR} which is also first considered and analyzed in this paper.
Since none of the existing algorithms in the literature of sparse signal processing can be applied to solve SPR \eqref{SPR} as explained in Section \ref{s:intro}, only comparison between SROP algorithm versus the corresponding one without the step of sparsity regularization on phase can be made.
The latter algorithm is nothing else but the famous Gerchberg-Saxton (GS) algorithm \cite{GerSax72}.
\bigskip

The common parameters for all experiments in this section are below:
\begin{itemize}[$\circ$]
  \item image size: $128 \times 128$ pixels,
  \item circular aperture: $ap$ with diameter $64$ pixels,
  \item amplitude modulation: $\chi$ is uniform with unity value,
  \item phase diversity: $\Phi^d = 4Z_2^0$, where $Z_2^0(\rho)=2\rho^2-1$ is the Zernike polynomial of order $2$ and azimuthal frequency zero,
  \item PSF image: $b = \left|\fft\paren{ap\odot \exp\paren{j\paren{\Phi+\Phi^d}}}\right|^2$, where $\Phi$ is the simulation phase,
  \item initial phase guess: zero phase everywhere,
  \item noise: Poisson noise applied the PSF image $b$ by using the MATLAB \emph{imnoise} function for experiments with noise,
  \item number of iterations: $1200$.
\end{itemize}

The other input data and parameters such as the simulation phase $\Phi$, its sparsity level $\|\Phi\|_0$ and the sparsity guess $s$ will be specified at each of the experiments.

\subsection{Effectiveness of sparsity regularization on phase}\label{subs:effective of SROP}

This section proves the main objective of the paper.

\begin{figure}[H]
	\centering
    \includegraphics[scale=.43]{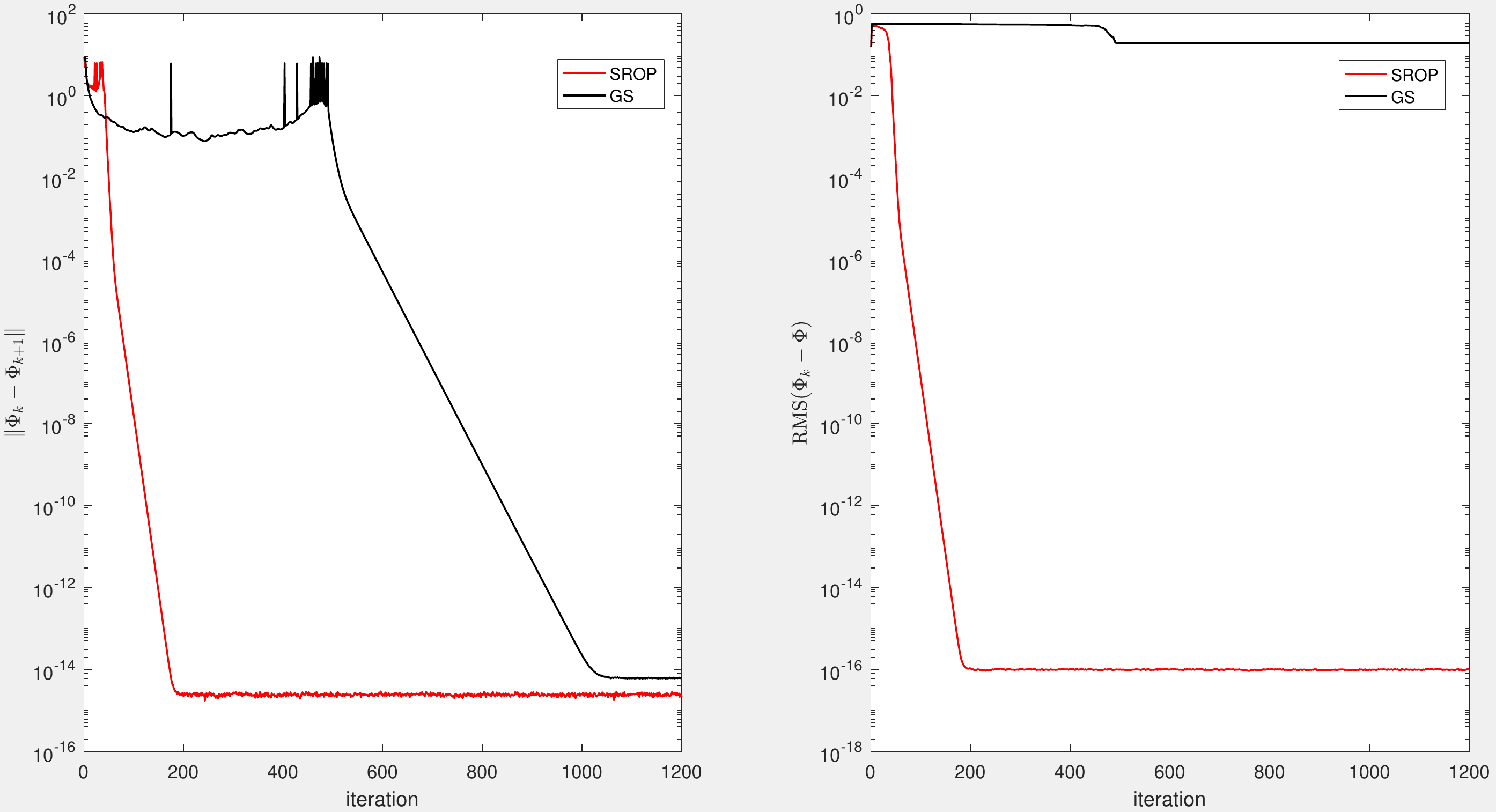}
    \caption{Performance of SROP and GS algorithms for SPR \eqref{SPR} \textbf{without noise}: the left-hand-side figure shows faster convergence and the right-hand-side one shows more accurate restoration of the SROP algorithm.}
    \label{fig:SROP_GS_noiseless_ph}
\end{figure}

We compare SROP algorithm with GS algorithm via two important output figures: 1) the \emph{change} of the distance between two consecutive iterations $\norm{\Phi_k-\Phi_{k+1}}$ which is of interest for understanding convergence properties of the algorithms; 2) the \emph{feasibility gap} RMS$\paren{\Phi_k-\Phi}$ shows the quality of retrieval.
\bigskip

Experiment setup:
\begin{itemize}[$\circ$]
  \item simulation phase: $\Phi$ randomly generated with values in $[-\pi,\pi]$,
  \item sparsity level: $\|\Phi\|_0 = 319$ (the pixel totality is $3168$),
  \item sparsity parameter: $s=335$ (about 105\% of the true sparsity level).
\end{itemize}
\bigskip

The performance of SROP algorithm for SPR \eqref{SPR} compared to GS algorithm in the settings without noise and with Poisson noise is summarized in Figures \ref{fig:SROP_GS_noiseless_ph} and \ref{fig:SROP_GS_noise_ph}, respectively.
It is worth mentioning that the performance shown in Figures \ref{fig:SROP_GS_noiseless_ph} and \ref{fig:SROP_GS_noise_ph} is consistent for all the experiments we have made for random realizations of $\Phi$ described above.
The numerical result clearly demonstrates that our suggestion to apply the sparsity regularization on phase for SPR \eqref{SPR} is effective.

\begin{figure}[H]
	\centering
    \includegraphics[scale=.43]{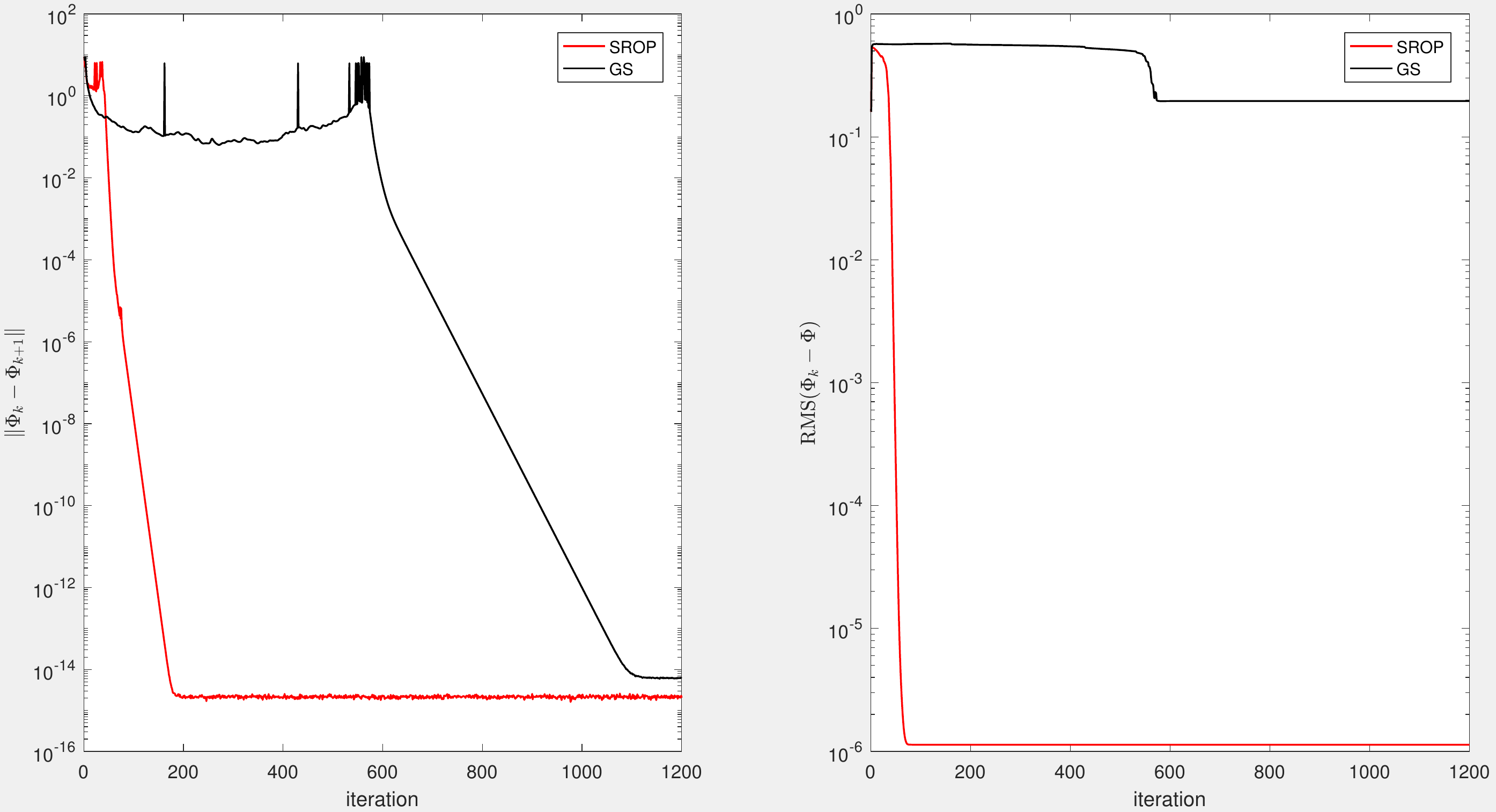}
    \caption{Performance of SROP and SF algorithms for SPR \eqref{SPR} \textbf{with Poisson noise}: the left-hand-side figure shows faster convergence and the right-hand-side one shows better restoration of the SROP algorithm.}
    \label{fig:SROP_GS_noise_ph}
\end{figure}

\subsection{Solvability with respect to sparsity level}\label{subs:solv wrt spar level}

This section numerically addresses the question: to which sparsity level of $\Phi$, the sparsity regularization on phase can be applied efficiently for solving SPR \eqref{SPR}.
Recall the discussion in Section \ref{s:intro} that the analysis of this paper can be applied to characterize phase-only objects in a number of circumstances of optical science where the phase object occupies less than 10\% of the whole filed.
The results of this section, in particular, show that SROP algorithm is efficient up to that level of sparsity of the phase object.

\bigskip

Experiment setup:
\begin{itemize}[$\circ$]
  \item simulation phase: $120$ values of $\Phi$ with different sparsity levels randomly generated with values in $[-\pi,\pi]$,
  \item sparsity level: $120$ different values of $\|\Phi\|_0 = 4k$, for $k=\overline{1,120}$ (consistent with the simulation phase above),
  \item sparsity parameter: $s \simeq 105\%\|\Phi\|_0$,
  \item noise: Poisson noise.
\end{itemize}

\begin{figure}[H]
	\centering
    \includegraphics[scale=.39]{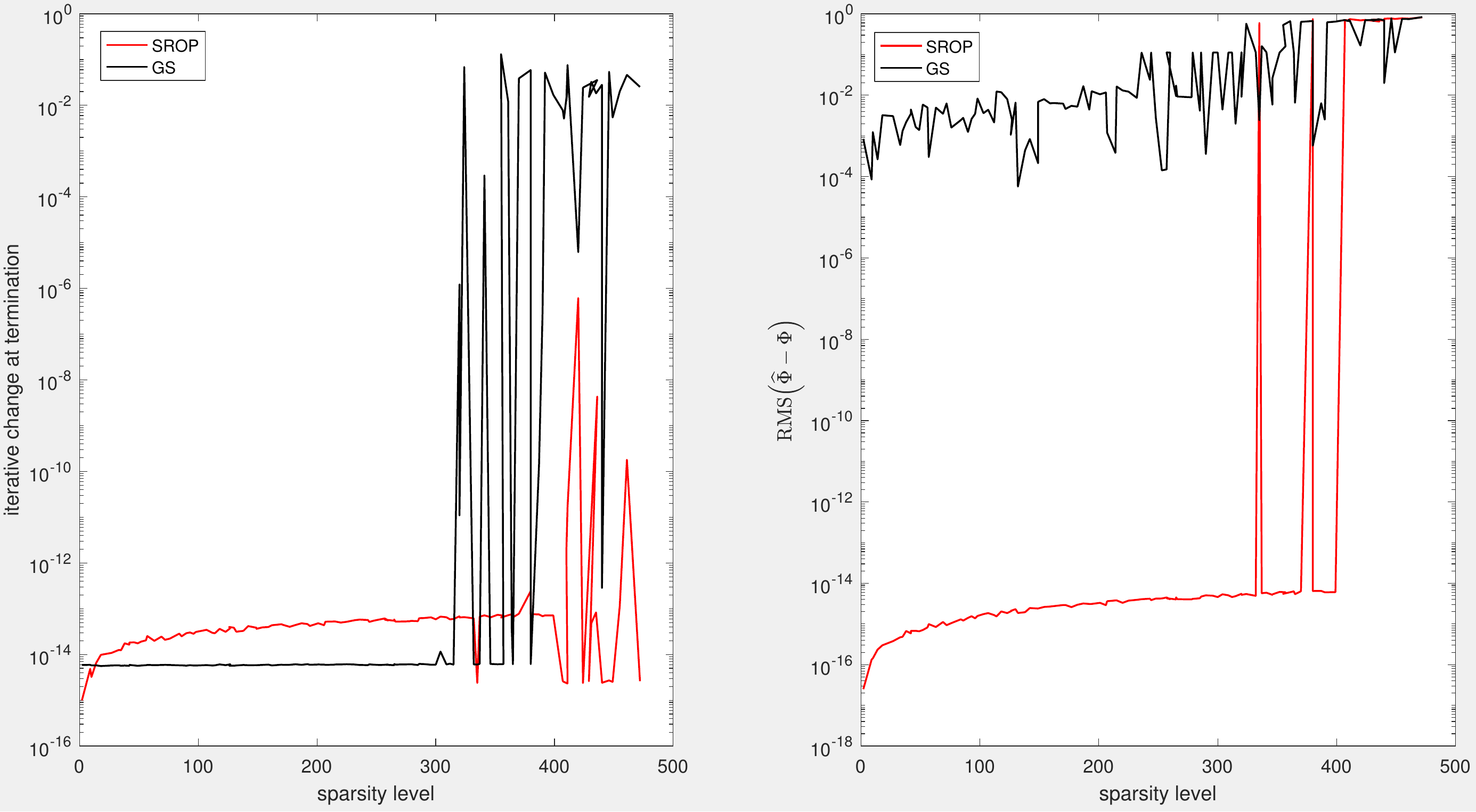}
    \caption{Performance of SROP and GS algorithms for SPR \eqref{SPR} \textbf{without noise} for different levels of sparsity of $\Phi$: the right-hand-side figure shows more accurate restoration of the first algorithm for sparsity level up to $330$ pixels ($>10\%$).}
    \label{fig:SROP_GS_solv_vs_spar_noiseless_ph}
\end{figure}
\begin{figure}[H]
	\centering
    \includegraphics[scale=.39]{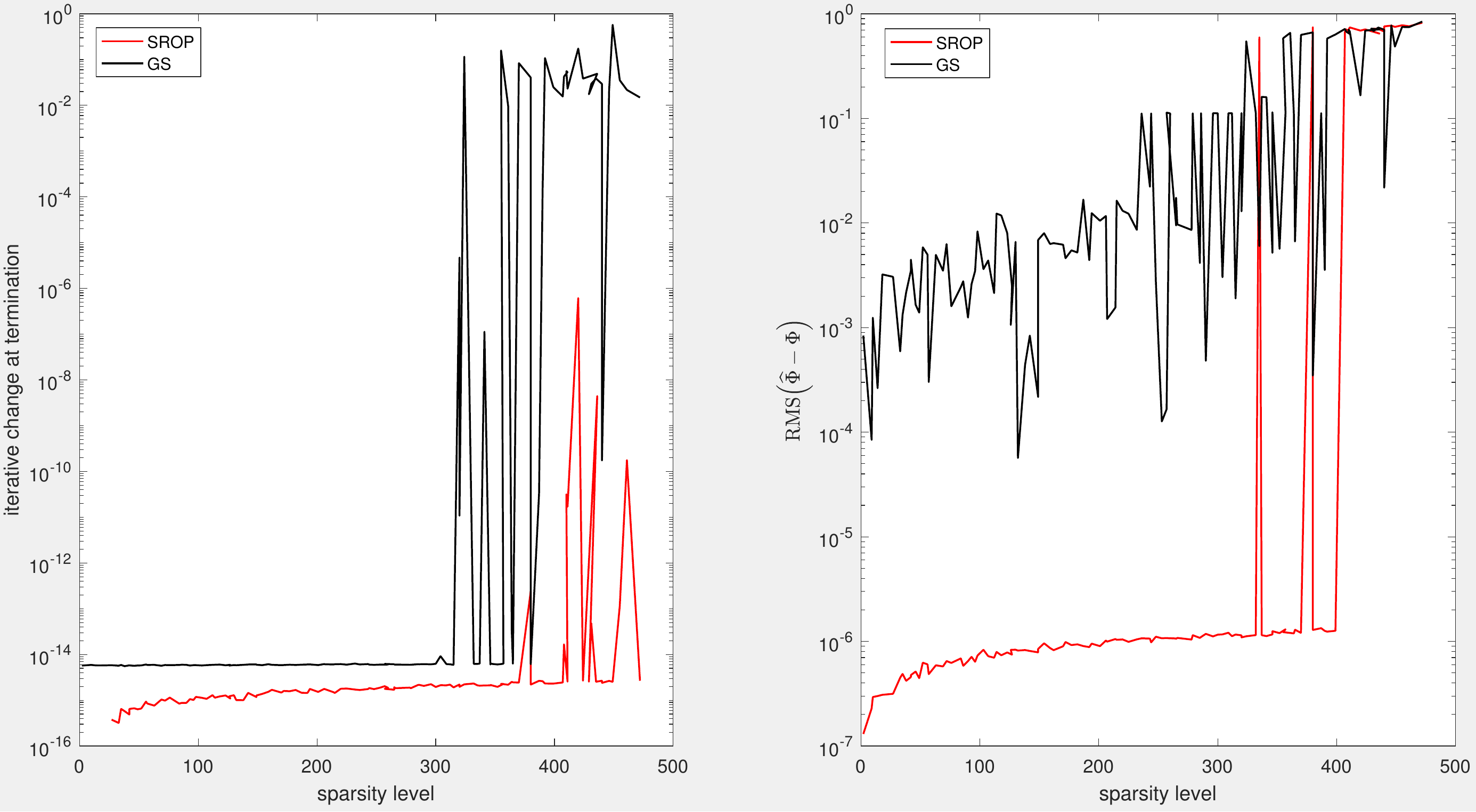}
    \caption{Performance of SROP and GS algorithms for SPR \eqref{SPR} \textbf{with Poisson noise} for different levels of sparsity of $\Phi$: the left-hand-side figure shows faster convergence and the right-hand-side one shows more accurate restoration of the first algorithm for sparsity level up to $330$ pixels ($>10\%$).}
    \label{fig:SROP_GS_solv_vs_spar_noise_ph}
\end{figure}

We report the \emph{change} at termination of the algorithms and the corresponding quality of phase retrieval in terms of  RMS$\paren{\widehat{\Phi}-\Phi}$ for each of the $120$ values of $\Phi$ prescribed above.
The performance of SROP algorithm for SPR \eqref{SPR} compared to GS algorithm in the settings without noise and with Poisson noise is summarized in Figures \ref{fig:SROP_GS_solv_vs_spar_noiseless_ph} and \ref{fig:SROP_GS_solv_vs_spar_noise_ph}, respectively.
The figures on the right-hand-side of Figures \ref{fig:SROP_GS_solv_vs_spar_noiseless_ph} and \ref{fig:SROP_GS_solv_vs_spar_noise_ph} show that the sparsity regularization on phase is very efficient for solving SPR \eqref{SPR} with the sparsity level up to more than $10\%$ ($330$ over the totality of $3168$ pixels).
The figures on the left-hand-side of Figures \ref{fig:SROP_GS_solv_vs_spar_noiseless_ph} and \ref{fig:SROP_GS_solv_vs_spar_noise_ph} just guarantee that the convergence properties of SROP algorithm is not worse than GS algorithm (see also Section \ref{subs:effective of SROP}).

\subsection{Stability with respect to sparsity parameter}\label{subs:stab wrt s_hat}

The sparsity level of $\Phi$ is not known in advance.
Fortunately, this section demonstrates the stability of the SROP algorithm with respect to the sparsity parameter $s$.
The observation is similar to the stability of projection methods for solving \emph{sparse affine feasibility} with respect to the sparsity parameter \cite{HesLukNeu14}.
\bigskip

Experiment setup:
\begin{itemize}[$\circ$]
  \item simulation phase: $\Phi$ randomly generated with values in $[-\pi,\pi]$,
  \item sparsity level: $\|\Phi\|_0 = 319$ (the pixel totality is $3168$),
  \item sparsity parameter: $120$ different values $s=168 + 25k$, for $k=\overline{1,120}$.
\end{itemize}
\bigskip

We report the \emph{change} at termination of SROP algorithm and the corresponding quality of phase retrieval in terms of  RMS$\paren{\widehat{\Phi}-\Phi}$ for each of the $120$ values of $s$ prescribed above.

\begin{figure}[H]
	\centering
    \includegraphics[scale=.43]{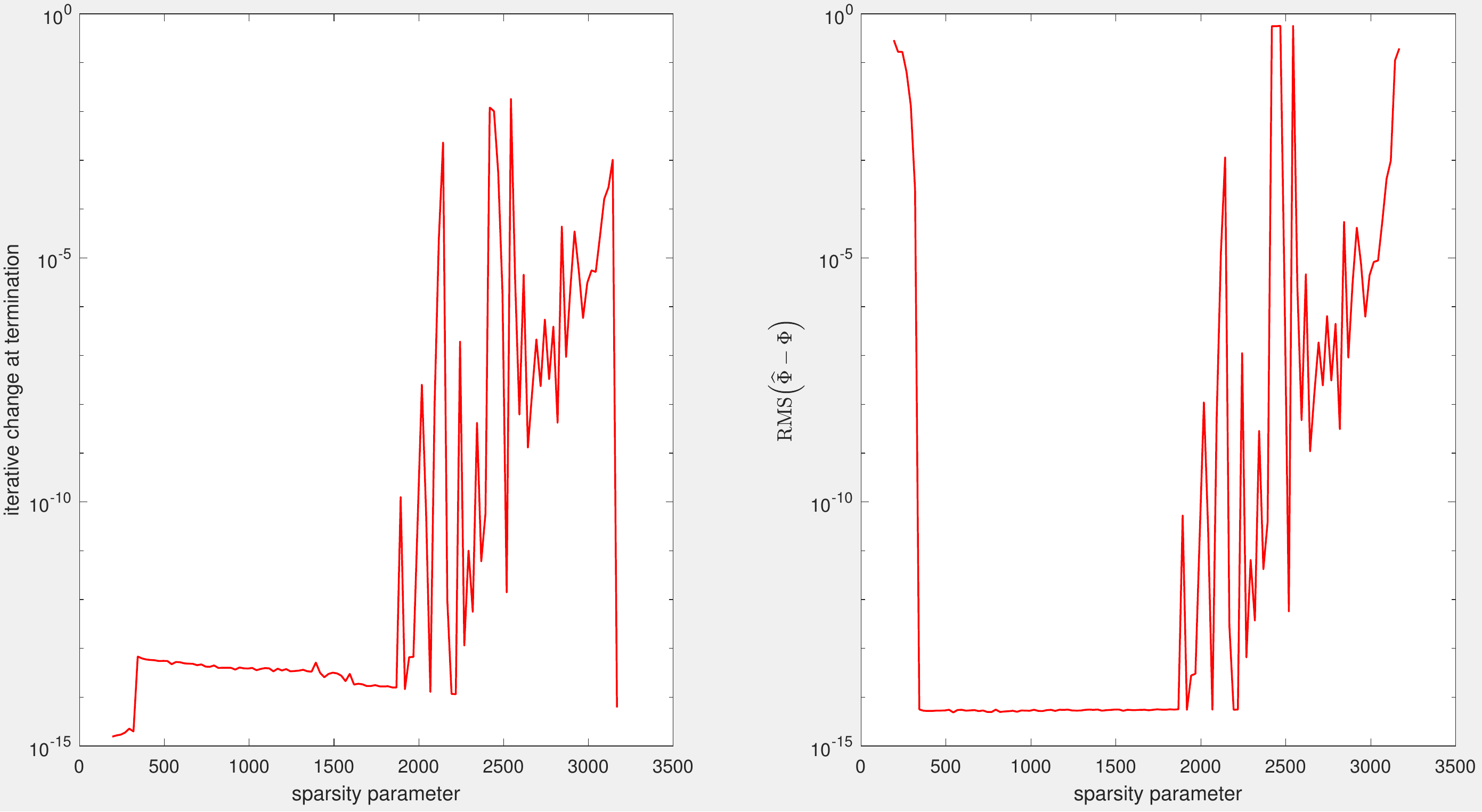}
    \caption{Performance of SROP algorithm for SPR \eqref{SPR} with different values of sparsity parameter in the \textbf{noise-free setting}: accurate retrieval for sparsity parameter ranging from $100\%$ up to $500\%$ of the sparsity level.}
    \label{fig:SROP_stab_wrt_s_hat_noiseless}
\end{figure}
\begin{figure}[H]
	\centering
    \includegraphics[scale=.43]{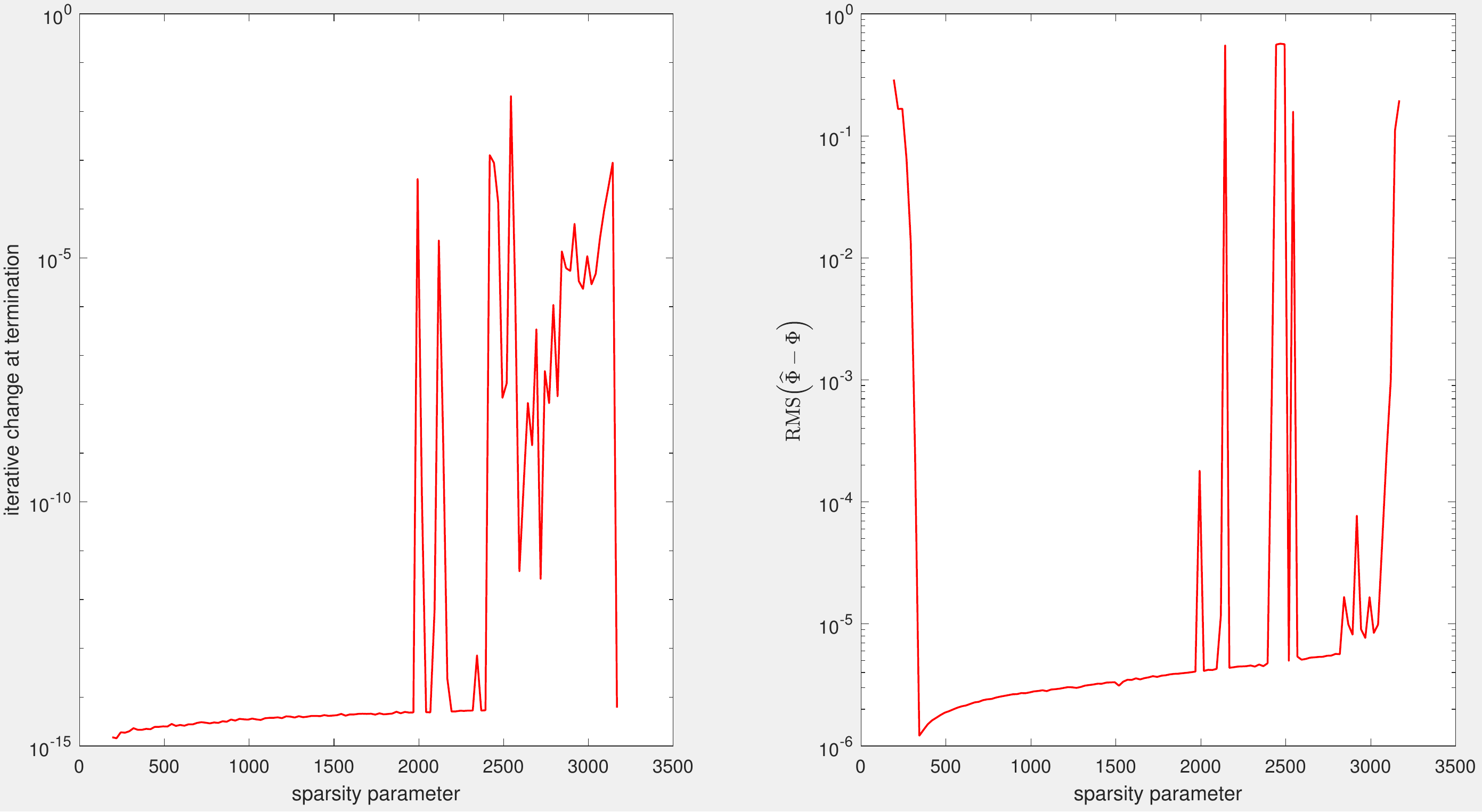}
    \caption{Performance of SROP algorithm for SPR \eqref{SPR} with different values of sparsity parameter in the \textbf{setting with Poisson noise}: accurate retrieval for sparsity parameter ranging from $100\%$ up to $500\%$ of the sparsity level.}
    \label{fig:SROP_stab_wrt_s_hat_noise}
\end{figure}

The performance of SROP algorithm for SPR \eqref{SPR} in the settings without noise and with Poisson noise is summarized in Figures \ref{fig:SROP_stab_wrt_s_hat_noiseless} and \ref{fig:SROP_stab_wrt_s_hat_noise}, respectively.
For sparsity parameter smaller than the sparsity level of $\Phi$ (e.g., $s<\|\Phi\|_0 = 319$), SROP algorithm fails to restore the phase as expected since the feasibility problem \eqref{auxi_FP} becomes inconsistent even in the noise-free case.
For sparsity parameter from the sparsity level of $\Phi$ up to $500\%$ of that value, the phase restoration by SROP algorithm is more or less perfect.
This clearly demonstrates that the sparsity regularization on phase for SPR \eqref{SPR} is very stable with respect to the sparsity parameter.

\section{Conclusion}\label{s:conclusion}

The phase retrieval problem with sparse phase constraint (SPR) has been analyzed in this paper for the first time.
Recall again that the considered problem is in essence different from the sparse signal recovery problem which is widely known and also well investigated in the literature.
We have developed a new sparsity regularization scheme (Section \ref{s:sparsity constraint}) which is then applied to design an efficient solution method (called SROP algorithm, Section \ref{s:SROP}) for the SPR problem.
Each iteration of SROP algorithm consists of two main steps: 1) an iteration of the Gerchberg-Saxton algorithm and 2) the sparsity regularization applied to the iteratively estimated phase to capture the sparsity property of SPR.
The latter step is mathematically a rotation on the underlying space $\mathbb{C}^{n\times n}$ but with direction varying in iterations.
We have proved in Section \ref{s:analysis} that this rotation step is indeed a metric projection on an auxiliary set which is independent of iterations.
This result is rather unexpected since the sparsity regularization applied to the phase on $(-\pi,\pi]^{n\times n}$ turns out to be a metric projection on the underlying complex Hilbert space $\mathbb{C}^{n\times n}$.
It is worth emphasizing that this equivalence occurs only for uniform amplitude modulation (Remark \ref{r:only for uniform}).
As a consequence, SROP algorithm is proved to be equivalent to the cyclic projections algorithm for solving a feasibility problem involving three auxiliary sets.
The convergence analysis of SROP algorithm is then obtained based on recent results for the cyclic projections algorithm for feasibility by analyzing regularity properties of the latter auxiliary sets (Theorem \ref{t:convergence}).
Our numerical results have proved clear empirical advantages of the new scheme of sparsity regularization on phase (Section \ref{s:numerical simulation}).
Overall, we have achieved the research goal of this paper which has been to design and analyze an efficient solution algorithm for solving the sparse phase retrieval problem \eqref{SPR} given only a single intensity PSF image.
Practical application of \emph{phase retrieval with sparse phase constraint} with respect to the orthogonal basis of Zernike polynomials will be investigated in a follow-up paper.

%\begin{acknowledgements}
%If you'd like to thank anyone, place your comments here
%and remove the percent signs.
%\end{acknowledgements}

% BibTeX users please use one of
%\bibliographystyle{spbasic}      % basic style, author-year citations
%\bibliographystyle{spmpsci}      % mathematics and physical sciences
%\bibliographystyle{spphys}       % APS-like style for physics
%\bibliography{}   % name your BibTeX data base

% Non-BibTeX users please use

\end{document}